\tikzstyle{V}=[fill=black,circle,scale=0.2, outer sep = 4pt]
\newtheorem{thm}{Theorem}[section]
\newtheorem{prop}[thm]{Proposition}
\theoremstyle{remark}
\newtheorem{rmk}[thm]{Remark}
\theoremstyle{definition}
\newtheorem{defn}[thm]{Definition}
\newcommand{\bi}{\begin{itemize}}
\newcommand{\ei}{\end{itemize}}
\newcommand{\be}{\begin{enumerate}}
\newcommand{\ee}{\end{enumerate}}
\newcommand{\C}{\mathbb{C}}
\renewcommand{\H}{\mathcal{H}}
\newcommand{\R}{\mathbb{R}}
\newcommand{\N}{\mathbb{N}}
\providecommand{\keywords}[1]{{\textit{Key words and phrases:}} #1}
\providecommand{\classification}[1]{{\textit{2010 Mathematics Subject Classification:}} #1}
\def\IoIIdimdots(#1/#2/#3,#4){\node at (#1,#4) {$.$};\node at (#2,#4) {$.$};\node at (#3,#4) {$.$};}
\def\IIoIIdimdots(#1,#2/#3/#4){\node at (#1,#2) {$.$};\node at (#1,#3) {$.$};\node at (#1,#4) {$.$};}
\def\IoIIIdimdots(#1/#2/#3,#4,#5){\node at (#1,#4,#5) {$.$};\node at (#2,#4,#5) {$.$};\node at (#3,#4,#5) {$.$};}
\def\IIoIIIdimdots(#1,#2/#3/#4,#5){\node at (#1,#2,#5) {$.$};\node at (#1,#3,#5) {$.$};\node at (#1,#4,#5) {$.$};}
\def\IIIoIIIdimdots(#1,#2,#3/#4/#5){\node at (#1,#2,#3) {$.$};\node at (#1,#2,#4) {$.$};\node at (#1,#2,#5) {$.$};}
\begin{document}

\title{Spectral triples for higher-rank graph $C^*$-algebras}

\author{Carla Farsi, Elizabeth Gillaspy, Antoine Julien, Sooran Kang, and Judith Packer}

\date{\today}

\maketitle

\begin{abstract}

In this note, we present a new way to associate a spectral triple to the noncommutative $C^*$-algebra $C^*(\Lambda)$ of a strongly connected finite higher-rank graph $\Lambda$.  
We generalize a spectral triple of Consani and Marcolli from Cuntz-Krieger algebras to higher-rank graph $C^*$-algebras $C^*(\Lambda)$, and we prove that these spectral triples are intimately connected to the wavelet decomposition of the infinite path space of $\Lambda$ which was introduced by Farsi, Gillaspy, Kang, and Packer in 2015. In particular, we prove that the wavelet decomposition of Farsi et al.~describes the eigenspaces of the  Dirac operator of this spectral triple.
   \end{abstract}

\classification{46L05, 46L87, 58J42.}

\keywords{Spectral triple, wavelets, higher-rank graph, Dirac operator}

\tableofcontents

\section{Introduction}

Inspired by constructions from Arakelov geometry and Archimedean cohomology,
Consani and Marcolli 
develop in \cite{consani-marcolli}   spectral triples associated to certain Cuntz--Krieger algebras.
In this note, we expand the applicability of these spectral triples by generalizing the construction of \cite{consani-marcolli} to the setting of higher-rank graphs.  We also  establish the compatibility of these spectral triples with the representations and wavelets for higher-rank graphs which were developed in \cite{FGKP}. Indeed, both spectral triples and wavelets are algebraic structures which encode geometrical information, so it is natural  to ask about the relationship between   wavelets  and spectral triples. 

Our   earlier paper \cite{FGJKP2} was the first to establish  a connection between wavelets and spectral triples in the setting of higher-rank graphs $\Lambda$.  In that paper, we linked the   representations of $C^*(\Lambda)$ from  \cite{FGKP}, and their associated wavelets, to the eigenspaces of the Laplace--Beltrami operators which arise from the spectral triples of Pearson and Bellissard \cite{pearson-bellissard}.  The present article establishes that the wavelets from \cite{FGKP} can also be identified with the eigenspaces of the Dirac operator of a Consani--Marcolli type spectral triple for $C^*(\Lambda)$.

 Higher-rank graphs (also called $k$-graphs)  were introduced by Kumjian and Pask in \cite{kp} to  provide a combinatorial model to the higher-dimensional Cuntz-Krieger algebras given by Robertson and Steger in \cite{robertson-steger}. The $C^*$-algebras $C^*(\Lambda)$ of $k$-graphs $\Lambda$ have been studied by many authors and provided concrete, computable examples of many classifiable $C^*$-algebras. The graphical character of $k$-graphs has also facilitated the analysis of structural properties of $C^*(\Lambda)$,  such as  simplicity and ideal structure \cite{rsy2, robertson-sims, davidson-yang-periodicity, kang-pask, ckss}, quasidiagonality \cite{clark-huef-sims} and KMS states \cite{aHLRS, aHLRS1, aHKR}.

However, the analysis of the noncommutative geometry of $C^*(\Lambda)$ is in its infancy. 
Although Pask, Rennie, and Sims establish in \cite{pask-rennie-sims-manifold} that higher-rank graph $C^*$-algebras often provide tractable  examples of noncommutative manifolds, the current literature contains only one class of (semifinite) spectral triples  for $C^*(\Lambda)$, namely those studied in \cite{pask-rennie-sims}. In the Pearson--Bellissard spectral triples $(\mathcal A, \H, D)$ which were associated to higher-rank graphs in \cite{FGJKP2}, the algebra $\mathcal A = C_{Lip}(\Lambda^\infty)$ is  commutative.   Thus,  the spectral triples for the noncommutative $C^*$-algebra $C^*(\Lambda)$, which we construct in Theorem \ref{thm-Consani-Marcolli-spectral-triples-k-graphs}  below, constitute an important step forward in our understanding of the noncommutative geometry of $C^*(\Lambda)$, in particular because of the link we establish between these spectral triples and wavelet theory for $C^*(\Lambda)$.

Wavelets for higher-rank graphs $\Lambda$ were introduced by four of the authors of the current paper in \cite{FGKP}, building on work of Marcolli and Paolucci \cite{marcolli-paolucci} for Cuntz--Krieger algebras, which in turn was inspired by the wavelets for fractal spaces developed by  Jonsson \cite{jonsson} and Strichartz \cite{strichartz}.  In all of these settings, the wavelets give an orthogonal decomposition of $L^2(X, \mu)$ for a fractal space $X$, which arises from applying dilation and translation operators to a finite family of ``mother wavelets'' $f_i \in L^2(X, \mu)$. The dilation and translation operators are determined by the underlying geometry.  In Jonsson and Strichartz' work,  the self-similar structure of the fractal space $X$ dictates the dilation and translation operators, while in the higher-rank graph case, the dilation and translation operators arise from the graph structure.  (See Section~\ref{sec:wavelets} for more details.)

To further our understanding of the noncommutative geometry of $C^*(\Lambda)$, 
 we  construct in Theorem~\ref{thm-Consani-Marcolli-spectral-triples-k-graphs} a spectral triple $(\mathcal{A}_\Lambda, L^2(\Lambda^\infty, M), D)$, where $\mathcal A_\Lambda$ is a dense (noncommutative) subalgebra of $C^*(\Lambda)$. This spectral triple was inspired by the spectral triples for Cuntz--Krieger algebras constructed in \cite{consani-marcolli}, and offers a very different perspective on the noncommutative geometry of $C^*(\Lambda)$ than the spectral triples of \cite{pask-rennie-sims}.   Theorem \ref{thm:CM-Dirac-wavelets} then establishes our link between spectral triples and wavelets for higher-rank graphs by showing that the eigenspaces of the Dirac operator $D$ of this spectral triple agree with the wavelet decomposition of \cite{FGKP}.

\subsection*{Acknowledgments} E.G.~was partially supported by the SFB 878 ``Groups, Geometry, and Actions'' of the Westf\"alische-Wilhelms-Universit\"at M\"unster. C.F.~and J.P.~were  partially supported by two individual  grants from the Simons Foundation (C.F. \#523991; J.P. \#316981).
S.K.~was supported by Basic Science Research Program through the National Research Foundation of Korea (NRF) funded by the Ministry of Education (\#2017R1D1A1B03034697).

\section{Background material}\label{sec:background}

We begin by detailing some foundational material needed for our results, and in particular  reviewing the definition of a higher-rank graph $\Lambda$, the definition of its $C^*$-algebra $C^*(\Lambda)$, and associated wavelets.

\subsection{Higher-rank graphs and their $C^*$-algebras}
\label{sec:kgraph}
Throughout this paper, we will view 
$\N: =\{0,1,2,\dots\}$ as a monoid under addition, or as a category.  In this interpretation, the natural numbers are the morphisms in $\N$.  Thus, for consistency with the standard notation $n \in \N$, we will write 
\[ \lambda \in \Lambda \]
to indicate that $\lambda $ is a morphism in the category $\Lambda$.

\begin{defn}
\label{def:k-graph}
A {\em higher-rank graph} or {\em $k$-graph} by definition is a countable small category $\Lambda$ with a degree functor $d:\Lambda\to \N^k$ satisfying the {\em factorization property}: for any morphism $\lambda\in\Lambda$ and any $m, n \in \N^k$ such that  $d(\lambda)=m+n \in \N^k$,  there exist unique morphisms $\mu,\nu\in\Lambda$ such that $\lambda=\mu\nu$ and $d(\mu)=m$, $d(\nu)=n$. 

We often think of $k$-graphs as a generalization of directed graphs, so we call objects $v \in\Lambda^0$ ``vertices'' and  morphisms $\lambda\in\Lambda$ are called ``paths.'' We write $r,s:\Lambda\to \Lambda^0$ for the range and source maps and $v \Lambda w=\{\lambda\in \Lambda: r(\lambda)=v, s(\lambda)=w\}$. Similarly, for any $n\in \N^k$, we write $v\Lambda^n=\{\lambda\in\Lambda: r(\lambda)=v, d(\lambda)=n\}$.
\end{defn}

 For $m,n\in\N^k$, we denote by $m\vee n$ the coordinatewise maximum of $m$ and $n$. Given  $\lambda,\eta\in \Lambda$, we write
\begin{equation*}
\Lambda^{\operatorname{min}}(\lambda,\eta):=\{(\alpha,\beta)\in\Lambda\times\Lambda\,:\, \lambda\alpha=\eta\beta,\; d(\lambda\alpha)=d(\lambda)\vee d(\eta)\}.
\end{equation*}

We say that a $k$-graph $\Lambda$ is \emph{finite} if $\Lambda^n$ is a finite set for all $n\in\N^k$ and say that $\Lambda$  \emph{has no sources} or \emph{is source-free} if $v\Lambda^n\ne \emptyset$ for all $v\in\Lambda^0$ and $n\in\N^k$. It is well known that this is equivalent to the condition that $v\Lambda^{e_i}\ne \emptyset$ for all $v\in \Lambda$ and all basis vectors $e_i$ of $\N^k$. Also we say that a $k$-graph is \emph{strongly connected} if, for all $v,w\in\Lambda^0$, $v\Lambda w\ne \emptyset$.


\begin{defn}\cite{kp}
If $\Lambda$ is a finite $k$-graph with no sources, write $C^*(\Lambda)$ for the universal $C^*$-algebra generated by partial isometries $\{s_\lambda\}_{\lambda \in \Lambda}$ satisfying the Cuntz--Krieger conditions:
\begin{itemize}
\item[(CK1)] $\{ s_v: v \in \Lambda^0\}$ is a family of mutually orthogonal projections;
\item[(CK2)] Whenever $s(\lambda) = r(\eta)$ we have $s_\lambda s_\eta = s_{\lambda \eta}$;
\item[(CK3)] For any $\lambda \in \Lambda, \ s_\lambda^* s_\lambda = s_{s(\lambda)}$;
\item[(CK4)] For all $v \in \Lambda^0$ and all $n \in \N^k$, $\sum_{\lambda \in v\Lambda^n} s_\lambda s_\lambda^* = s_v$.
\end{itemize}
\end{defn}

Condition (CK4) implies that for any $\lambda, \eta \in \Lambda$ we have 
\[ s_\lambda^* s_\eta = \sum_{(\alpha, \beta) \in \Lambda^{min}(\lambda, \eta)} s_\alpha s_\beta^*,\]
where we interpret empty sums as zero.  Consequently, $C^*(\Lambda) = \overline{\text{span}}\{ s_\lambda s_\eta^*: \lambda, \eta \in \Lambda\}$. 
\begin{defn}
\label{def:dense-subalg}
Let  $\mathcal{A}_\Lambda$ denote  the dense $*$-subalgebra of $C^*(\Lambda)$ spanned by $\{ s_\lambda s_\eta^*\}_{\lambda, \eta \in \Lambda}$.
\end{defn}

An important example of a $k$-graph is the category $\Omega_k$, where 
\[ \text{Obj}(\Omega_k) = \N^k, \qquad \text{Mor}(\Omega_k) = \{ (p, q) \in \N^k: p \leq q\}.\]
The  range and source maps $r,s$ in $\Omega_k$ are given by $r(p,q)=p$, $s(p,q)=q$, and the degree map $d: \Omega_k \to \N^k$ is given by 
\[ d(p, q) = q-p.\]

\begin{defn}
\label{def:infinite-path}
An {\em infinite path} in a $k$-graph $\Lambda$ is a degree preserving functor $x:\Omega_k\to \Lambda$.  We write $\Lambda^\infty$ for the set of infinite paths in $\Lambda$.

Given $\lambda \in \Lambda$, we define the {\em cylinder set}
$ [\lambda] \subseteq \Lambda^\infty$ by 
\[[\lambda] := \{ x \in \Lambda^\infty: x(0, d(\lambda)) = \lambda\}\]
to be the infinite paths with initial segment $\lambda$. It is well-known (cf.~\cite{kp})  that the collection of cylinder sets $\{[\lambda]\}_{\lambda \in \Lambda}$  forms a compact open basis for a locally compact Hausdorff topology on $\Lambda^\infty$. If a $k$-graph $\Lambda$ is finite, then $\Lambda^\infty$ is compact in this topology.

 For each $m\in \N^k$, we have a shift map $\sigma^m$ on $\Lambda^\infty$ given by 
\begin{equation}\label{eq:shift-map}
\sigma^m(x)(p,q)=x(p+m, q+m).
\end{equation}
 for $x\in \Lambda^\infty$ and $(p,q)\in \Omega_k$.
In duality to the shift map $\sigma^m$, for each $\lambda \in \Lambda$ we also have  a prefixing map $\sigma_\lambda: [s(\lambda)] \to [\lambda]$ given by 
\begin{equation}\label{eq:prefix-map}
 \sigma_\lambda(x) = \lambda x = \left[ (p, q) \mapsto \begin{cases} \lambda(p, q), & q \leq d(\lambda) \\
x(p-d(\lambda), q-d(\lambda)), & p \geq d(\lambda) \\
\lambda (p, d(\lambda))\, x(0, q-d(\lambda)), & p < d(\lambda) < q
\end{cases} \right]
\end{equation}
 \end{defn}

According to \cite[Proposition 8.1]{aHLRS}, for any   finite and strongly connected $k$-graph $\Lambda$, there is a unique self-similar Borel probability measure $M$ on $\Lambda^\infty$.  To describe $M$, we require more definitions.

\begin{defn}
\label{def:vertex-matrix}
For a finite $k$-graph $\Lambda$ and $1 \leq i \leq k$, the {\em vertex matrix}
$A_i \in M_{\Lambda^0}(\N)$ is 
\[ A_i(v,w) = \# (v\Lambda^{e_i} w).\]
\end{defn}
Lemma 3.1 of \cite{aHLRS} establishes that if $\Lambda$ is finite and strongly connected, then there exists a unique vector $\kappa^\Lambda \in (0, \infty)^{\Lambda^0}$, called the {\em Perron--Frobenius eigenvector} of $\Lambda$, such that 
\[ \sum_{v\in \Lambda^0} \kappa^\Lambda_v  =1 \qquad \text{ and } \qquad A_i \kappa^\Lambda = \rho_i \kappa^\Lambda \quad \forall \, 1 \leq i \leq k.\]

 The unique self-similar Borel probability measure $M$ of \cite{aHLRS} is given on cylinder sets by
\[
M([\lambda])=(\rho(\Lambda))^{-d(\lambda)}\kappa^{\Lambda}_{s(\lambda)}\quad\text{for}\;\; \lambda\in\Lambda.
\]
Here $\rho(\Lambda)=(\rho_1,\dots \rho_k)$, where $\rho_i$ denotes the spectral radius of the vertex matrix $A_i \in M_{\Lambda^0}(\N)$,
 and $(\rho(\Lambda))^n:=\rho_1^{n_1}\dots \rho_k^{n_k}$ for $n=(n_1,\dots n_k)\in \R^k$. We call the measure $M$ the \emph{Perron--Frobenius measure} on $\Lambda^\infty$.

\subsection{Wavelets on higher-rank graphs}
\label{sec:wavelets}
According to Proposition~3.4 and Theorem~3.5 of \cite{FGKP}, there is a separable representation $\pi$ of $C^*(\Lambda)$ on $L^2(\Lambda^\infty, M)$ when $\Lambda$ is a finite, strongly connected $k$-graph.  Theorem \ref{thm-Consani-Marcolli-spectral-triples-k-graphs} below  identifies a Dirac operator $D$ for which this representation gives a  spectral triple $(\mathcal{A}_\Lambda, L^2(\Lambda^\infty, M), D)$.

{Before stating Theorem \ref{thm-Consani-Marcolli-spectral-triples-k-graphs}, we review the definition of the representation $\pi$ and the associated wavelet decomposition of $L^2(\Lambda^\infty, M)$.
For $p\in \N^k$ and $\lambda\in \Lambda$, let $\sigma^p$ and $\sigma_\lambda$ be the shift  and prefixing maps on $\Lambda^\infty$ given in \eqref{eq:shift-map} and \eqref{eq:prefix-map}.
If we let $S_\lambda:=\pi(s_\lambda)$, the image of the standard generator $s_\lambda$ of $C^*(\Lambda)$ under the representation $\pi$, then \cite[Theorem 3.5]{FGKP} tells us that $S_\lambda$ is given on characteristic functions of cylinder sets by
\begin{equation}\label{eq:S_lambda}
\begin{split}
S_\lambda\chi_{[\eta]} (x) &=\chi_{[\lambda]}(x)\rho(\Lambda)^{d(\lambda)/2}\chi_{[\eta]}(\sigma^{d(\lambda)}(x))=\begin{cases} \rho(\Lambda)^{d(\lambda)/2}\quad \text{if $x=\lambda\eta y$ for some $y \in \Lambda^\infty$}\\ 
     0 \quad\quad\quad \text{otherwise}\end{cases}\\
     &= \rho(\Lambda)^{d(\lambda)/2} \chi_{[\lambda \eta]}(x).
\end{split}
\end{equation}

   Moreover, the  adjoint $S^*_\lambda$ of  $S_\lambda$  is given on characteristic functions of cylinder sets by
    \begin{equation}
\label{eq:S-lambda-star}    
    \begin{split}
    S^*_\lambda \chi_{[\eta]}(x) &=\chi_{[s(\lambda)]}(x) \rho(\Lambda)^{-d(\lambda)/2}\chi_{[\eta]}(\sigma_\lambda(x))=\begin{cases} \rho(\Lambda)^{-d(\lambda)/2}\quad\text{if $\lambda x=\eta y$ for some $y\in \Lambda^\infty$}\\ 0 \quad\quad\quad\text{otherwise}\end{cases}\\
    &= \rho(\Lambda)^{-d(\lambda)/2}\sum_{(\zeta, \xi) \in \Lambda^{min}(\lambda, \eta)} \chi_{[\zeta]}(x).
   \end{split}\end{equation}
 }

We can think of the operators $S_{\lambda}$ as combined ``scaling and translation'' operators, since they change both the size and the range of a cylinder set $[\eta]$, and are intimately tied to the geometry of the $k$-graph $\Lambda$.

This perspective enabled four of the authors of the current paper to use the representation $\pi$ to construct a wavelet decomposition of $L^2(\Lambda^\infty, M)$; we recall the details from \cite[Section 4]{FGKP}.
 For each vertex $v$ in $\Lambda$, let 
\[ D_v =  v\Lambda^{(1,\ldots, 1)} .\] 
One can show (cf.~\cite[Lemma 2.1(a)]{aHLRS}) that $D_v$ is always nonempty when $\Lambda$ is strongly connected.

Enumerate the elements of $D_v$ as $D_v = \{ \lambda_0, \ldots, \lambda_{\#(D_v) -1}\}.$
Observe that if $D_v = \{ \lambda\}$ is a 1-element set, then $[v] = [\lambda]$.  If $\#(D_v) > 1$,
then for each  $1 \leq i \leq \#(D_v) -1$, we define 
\begin{equation}\label{eq:f_iv}
f^{i,v} = \frac{1}{M[\lambda_0]} \chi_{[\lambda_0]} - \frac{1}{M[\lambda_i]} \chi_{[\lambda_i]}.
\end{equation}
One easily checks that in $L^2(\Lambda^\infty, M)$, $\langle f^{i,v} , \chi_{[w]} \rangle = 0$ for all $i$ and all vertices $v, w$, and that \[ \{ f^{i, v}: v \in \Lambda^0, 1 \leq i\leq \#(D_v)-1\}\]
is an orthogonal set. 
Therefore, the functions $\{f^{i,v}\}_{i, v}$ span the subspace $\mathcal W_{0, \Lambda}\subseteq L^2(\Lambda^\infty, M)$ from \cite[Theorem 4.2]{FGKP}, which we will henceforth call $\mathcal W_0$.

The following Theorem, which was proved in \cite{FGKP}, justifies our labeling of the orthogonal decomposition \eqref{eq:wavelet-decomp} as a wavelet decomposition: the subspaces $\mathcal W_n$ are given by applying ``scaling and translation'' operators $S_\lambda$ to the finite family of ``mother functions'' $\{ f^{i,v}\}_{i,v}$.
\begin{thm}\cite[Theorem 4.2]{FGKP}
\label{thm:wavelets}
Let $\Lambda$ be a finite, strongly connected $k$-graph and define $\mathscr V_0 := \text{span} \{\chi_{[v]}: v \in \Lambda^0\}$. Let $\mathscr V_0 := \text{span} \{ \chi_{[v]}: v \in \Lambda^0\}$, and 
set
 	\[\mathcal W_n = \text{span}\{ S_\lambda f^{i, s(\lambda)}: d(\lambda) = (n, \ldots, n), 1 \leq i \leq \#(D_{s(\lambda)}) - 1\}\]
 	for each $n \in \N$. Then $\{ S_\lambda f^{i, s(\lambda)}: d(\lambda) = (n, \ldots, n), 1 \leq i \leq \#(D_{s(\lambda)}) - 1\}$ is a basis for $\mathcal W_n$ and 
 	\begin{equation}
 	\label{eq:wavelet-decomp}
 	L^2(\Lambda^\infty, M) \cong  \mathscr V_0 \oplus \bigoplus_{n=0}^\infty \mathcal W_n.\end{equation}
\end{thm}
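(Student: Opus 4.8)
The plan is to realize the decomposition \eqref{eq:wavelet-decomp} as the limit of a nested sequence of finite-dimensional ``approximation spaces.'' For $n \in \N$ set
\[ \mathscr V_n := \operatorname{span}\{\chi_{[\mu]} : \mu \in \Lambda^{(n,\ldots,n)}\}, \]
which agrees with the given $\mathscr V_0$ since $\Lambda^{(0,\ldots,0)} = \Lambda^0$. Each cylinder decomposes as $[\mu] = \bigsqcup_{\nu \in s(\mu)\Lambda^{(1,\ldots,1)}} [\mu\nu]$ by the factorization property, so $\chi_{[\mu]} = \sum_\nu \chi_{[\mu\nu]}$ and hence $\mathscr V_n \subseteq \mathscr V_{n+1}$. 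Moreover $\bigcup_n \mathscr V_n$ is dense in $L^2(\Lambda^\infty, M)$: the span of all cylinder indicators is a unital $*$-subalgebra of $C(\Lambda^\infty)$ (products of cylinder indicators decompose into finite sums of cylinder indicators via the sets $\Lambda^{\min}$) that separates points, hence is dense in $C(\Lambda^\infty)$ by Stone--Weierstrass and therefore dense in $L^2(\Lambda^\infty, M)$; and every $\chi_{[\lambda]}$ lies in $\mathscr V_n$ whenever $(n,\ldots,n) \geq d(\lambda)$. It therefore suffices to prove the finite-dimensional orthogonal splitting $\mathscr V_{n+1} = \mathscr V_n \oplus \mathcal W_n$ for each $n \in \N$: iterating gives $\mathscr V_{n+1} = \mathscr V_0 \oplus \bigoplus_{j=0}^{n} \mathcal W_j$, and taking the closure of $\bigcup_n \mathscr V_n$ yields \eqref{eq:wavelet-decomp}. (Mutual orthogonality of the $\mathcal W_j$ is automatic once the splitting is known, since $\mathcal W_j \subseteq \mathscr V_{j+1} \subseteq \mathscr V_n$ while $\mathscr V_n \perp \mathcal W_n$, for $j < n$.)

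Two of the three ingredients of the splitting are quick. First, $\mathcal W_n \subseteq \mathscr V_{n+1}$: by \eqref{eq:S_lambda}, for $d(\lambda) = (n,\ldots,n)$ the vector $S_\lambda f^{i,s(\lambda)}$ is a linear combination of $\chi_{[\lambda\lambda_0]}$ and $\chi_{[\lambda\lambda_i]}$, both of degree $(n+1,\ldots,n+1)$. Second, $\mathcal W_n \perp \mathscr V_n$: if $d(\mu) = (n,\ldots,n) = d(\lambda)$, then the requirement $d(\lambda\alpha) = d(\lambda)\vee d(\mu)$ in the definition of $\Lambda^{\min}(\lambda,\mu)$ forces $d(\alpha) = d(\beta) = 0$, so $\Lambda^{\min}(\lambda,\mu) = \emptyset$ unless $\lambda = \mu$, in which case it is $\{(s(\lambda),s(\lambda))\}$; thus \eqref{eq:S-lambda-star} gives $S_\lambda^*\chi_{[\mu]} = \delta_{\lambda,\mu}\,\rho(\Lambda)^{-d(\lambda)/2}\chi_{[s(\lambda)]}$, whence $\langle S_\lambda f^{i,s(\lambda)}, \chi_{[\mu]}\rangle = \delta_{\lambda,\mu}\,\rho(\Lambda)^{-d(\lambda)/2}\langle f^{i,s(\lambda)}, \chi_{[s(\lambda)]}\rangle = 0$ by the orthogonality relation recorded after \eqref{eq:f_iv}.

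The remaining point — that $\mathscr V_n + \mathcal W_n$ exhausts $\mathscr V_{n+1}$, together with the claim that the given spanning set of $\mathcal W_n$ is a basis — is where I expect the main work, and I would settle it by a dimension count. For fixed $\lambda$ of degree $(n,\ldots,n)$, $S_\lambda$ sends the distinct cylinder indicators $\chi_{[\eta]}$ ($\eta \in [s(\lambda)]$) to scalar multiples of the distinct cylinder indicators $\chi_{[\lambda\eta]}$, so $S_\lambda$ is injective on $L^2([s(\lambda)], M)$; as the $f^{i,s(\lambda)}$ ($1 \leq i \leq \#(D_{s(\lambda)})-1$) are visibly independent by reading off the coefficient of $\chi_{[\lambda_i]}$ in \eqref{eq:f_iv}, and as the vectors $S_\lambda f^{i,s(\lambda)}$ for distinct $\lambda$ are supported in the pairwise disjoint cylinders $[\lambda]$, the whole family $\{S_\lambda f^{i,s(\lambda)}\}$ is linearly independent — hence a basis of $\mathcal W_n$, with $\dim\mathcal W_n = \sum_{d(\lambda)=(n,\ldots,n)}(\#(D_{s(\lambda)})-1)$ (vertices with $\#(D_v)=1$ contributing nothing, consistently with $[v]=[\lambda_0]$). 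The factorization property gives a bijection $(\lambda,\nu)\mapsto\lambda\nu$ between $\{(\lambda,\nu): d(\lambda)=(n,\ldots,n),\ \nu\in s(\lambda)\Lambda^{(1,\ldots,1)}\}$ and $\Lambda^{(n+1,\ldots,n+1)}$, so $\sum_{d(\lambda)=(n,\ldots,n)}\#(D_{s(\lambda)}) = \#\Lambda^{(n+1,\ldots,n+1)}$ and therefore $\dim\mathcal W_n = \#\Lambda^{(n+1,\ldots,n+1)} - \#\Lambda^{(n,\ldots,n)} = \dim\mathscr V_{n+1} - \dim\mathscr V_n$. Since $\mathscr V_n\oplus\mathcal W_n$ is an orthogonal direct sum inside $\mathscr V_{n+1}$ of dimension $\dim\mathscr V_{n+1}$, it equals $\mathscr V_{n+1}$; this completes the splitting and hence the proof.
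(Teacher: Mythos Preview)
The paper does not supply its own proof of this theorem; it is quoted from \cite{FGKP} and simply cited as \cite[Theorem 4.2]{FGKP}. Your argument is correct and follows the standard multiresolution-analysis pattern one would expect: build the nested approximation spaces $\mathscr V_n$, prove density of their union, and then establish $\mathscr V_{n+1}=\mathscr V_n\oplus\mathcal W_n$ by combining the easy inclusion and orthogonality with a dimension count. The only point you leave implicit is that $\dim\mathscr V_n=\#\Lambda^{(n,\ldots,n)}$, i.e.\ that the indicators $\{\chi_{[\mu]}:d(\mu)=(n,\ldots,n)\}$ are linearly independent; this is immediate since distinct degree-$(n,\ldots,n)$ cylinders are disjoint by the factorization property and each has strictly positive measure $M([\mu])=\rho(\Lambda)^{-d(\mu)}\kappa^\Lambda_{s(\mu)}>0$.

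It is worth noting that precisely your dimension computation --- namely
\[
\dim\mathcal W_q=\sum_{v\in\Lambda^0}\#(\Lambda^{(q,\ldots,q)}v)\bigl(\#(v\Lambda^{(1,\ldots,1)})-1\bigr)=\#\Lambda^{(q+1,\ldots,q+1)}-\#\Lambda^{(q,\ldots,q)}
\]
together with the fact that $\{\chi_{[\lambda]}:d(\lambda)=(s,\ldots,s)\}$ is a basis of $\mathcal R_s$ --- reappears verbatim in the paper's proof of Theorem~\ref{thm:CM-Dirac-wavelets}, so your approach is entirely consonant with how the authors themselves handle these spaces.
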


\section{Spectral triples of Consani-Marcolli type for strongly connected finite higher-rank graphs}
\label{sec-Consani-Marcolli-spectral-triples-for-k-graphs}

In Section 6 of \cite{consani-marcolli}, Consani and Marcolli construct a spectral triple for the Cuntz-Krieger algebra $\mathcal{O}_A$ associated to a matrix $A \in M_n(\N)$.
Recall from \cite{kprr} that if $E$ is the 1-graph with adjacency matrix $A$, then $\mathcal{O}_A \cong C^*(E)$.

In this section, we generalize the construction of Consani and Marcolli to build spectral triples for higher-rank graph $C^*$-algebras $C^*(\Lambda)$.
For these spectral triples (described in Theorem~\ref{thm-Consani-Marcolli-spectral-triples-k-graphs} below), it is shown in Theorem \ref{thm:CM-Dirac-wavelets} that the eigenspaces of the Dirac operator agree with the wavelet decomposition from \cite{FGKP}. We also discuss in Remark \ref{rmk:CM-rectangle-wavelets} at the end of the section how to modify the construction of the spectral triple to make the eigenspaces of the Dirac operator compatible with the $J$-shape wavelets of \cite{FGKP2}.

\begin{defn} Let $\Lambda$ be a finite, strongly connected $k$-graph.  Define $\mathcal{R}_{-1} \subset L^2(\Lambda^\infty, M)$ to be the linear subspace of constant functions on $\Lambda^\infty$. 
 For $s\in\N$, define  $\mathcal{R}_s \subset L^2(\Lambda^\infty, M)$ by
 \[
 \mathcal{R}_s=\text{span} \left\{ \chi_{[\eta]} : \ \eta \in \Lambda, \ \sup \{ d(\eta)_i: 1 \leq i \leq k\} \leq s \right\},
 \]
 where $d(\eta) = (d(\eta)_1, \ldots, d(\eta)_k) \in \N^k$.
 
 Let $\Xi_s$ be the orthogonal projection in $L^2(\Lambda^\infty, M)$ onto the subspace $\mathcal{R}_s$.
       For a pair $(s,r)\in \N\times ( \N \cup \{-1\}) $ with $s>r$, let
        \begin{equation*}\label{eq:ortho-proj}
        \widehat{\Xi}_{s,r} =\Xi_s  - \Xi_{r}.
        \end{equation*}
        Since $\mathcal{R}_r \subset \mathcal{R}_s$, $\widehat{\Xi}_{s,r}$ is the orthogonal projection onto the subspace $\mathcal{R}_s \cap ({\mathcal{R}_{r} })^{\perp}$.

      Given an increasing sequence $\alpha= \{ \alpha_q\}_{q\in \N}$  of positive real numbers with $\lim_{q \to \infty} \alpha_q = \infty $, we define an operator $D$ on $L^2(\Lambda^\infty, M)$ by
      
\begin{equation} \label{eq:Dirac}        
D:=\sum_{q\in \N} \alpha_q\;\widehat{\Xi}_{q, q-1}.
\end{equation}

\end{defn}

 Note first that the operator $D$ has eigenvalues $\alpha_q$ with eigenspaces $\mathcal{R}_{q}\cap \mathcal{R}_{(q-1)}^\perp$ by construction. Also note that when $\Lambda$ has one vertex, $\mathcal{R}_{-1}=\mathcal{R}_0$ and the orthogonal projection $\widehat{\Xi}_{0,-1}$ is the zero projection.

\begin{prop}\label{pr:dirac-self-adjoint}
The operator $D$ on $L^2(\Lambda^\infty, M)$ of Equation \eqref{eq:Dirac}  is unbounded and self-adjoint.
  \end{prop}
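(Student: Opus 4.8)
The plan is to recognize $D$ as a \emph{diagonal} (multiplication–type) operator: it acts as the scalar $\alpha_q$ on the closed subspace $\mathcal{H}_q := \operatorname{ran}(\widehat{\Xi}_{q,q-1}) = \mathcal{R}_q \cap \mathcal{R}_{q-1}^{\perp}$, acts as $0$ on $\mathcal{R}_{-1}$, and the subspaces $\mathcal{R}_{-1}, \mathcal{H}_0, \mathcal{H}_1, \dots$ are pairwise orthogonal. Such operators are automatically self-adjoint on their natural maximal domain, and unbounded precisely when the eigenvalues are unbounded and attained on nonzero eigenspaces. First I would record the structural facts. Since the cylinder sets $\{[\eta]\}_{\eta \in \Lambda}$ form a basis for the (compact) topology on $\Lambda^\infty$, the span $\bigcup_{s \in \N} \mathcal{R}_s$ of cylinder-set indicators is dense in $L^2(\Lambda^\infty, M)$, i.e. $\Xi_s \to I$ strongly as $s \to \infty$. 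Hence $L^2(\Lambda^\infty, M) = \mathcal{R}_{-1} \oplus \bigoplus_{q \in \N} \mathcal{H}_q$ as an orthogonal Hilbert-space direct sum and $I = \Xi_{-1} + \sum_{q \in \N} \widehat{\Xi}_{q,q-1}$ strongly, where $\Xi_{-1}$ is the projection onto the constants. Moreover, since $\Lambda$ is finite, each $\mathcal{R}_s$ (spanned by the finitely many $\chi_{[\eta]}$ with $\max_i d(\eta)_i \le s$) is finite-dimensional.

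Next I would make the domain precise. Set
\[
\operatorname{dom}(D) := \Big\{ \xi \in L^2(\Lambda^\infty, M) : \ \sum_{q \in \N} \alpha_q^2 \, \big\|\widehat{\Xi}_{q,q-1}\xi\big\|^2 < \infty \Big\},
\]
so that the partial sums of $\sum_{q} \alpha_q \widehat{\Xi}_{q,q-1}\xi$ are Cauchy and $D\xi$ is well defined in $L^2(\Lambda^\infty, M)$ for $\xi \in \operatorname{dom}(D)$. This domain is dense since it contains $\bigcup_s \mathcal{R}_s$ (on each finite-dimensional $\mathcal{R}_s$ the defining sum is finite). Symmetry on $\operatorname{dom}(D)$ is then immediate from the fact that the $\widehat{\Xi}_{q,q-1}$ are mutually orthogonal self-adjoint projections: for $\xi, \eta \in \operatorname{dom}(D)$,
\[
\langle D\xi, \eta\rangle = \sum_{q} \alpha_q \, \langle \widehat{\Xi}_{q,q-1}\xi, \widehat{\Xi}_{q,q-1}\eta\rangle = \langle \xi, D\eta\rangle .
\]

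To upgrade symmetry to self-adjointness I would compute $\operatorname{dom}(D^*)$ directly. If $\eta \in \operatorname{dom}(D^*)$, then applying $\langle D\xi, \eta\rangle = \langle \xi, D^*\eta\rangle$ with $\xi$ ranging over $\mathcal{H}_q$ forces $\widehat{\Xi}_{q,q-1} D^*\eta = \alpha_q\, \widehat{\Xi}_{q,q-1}\eta$ for every $q$; summing the squared norms over $q$ gives $\sum_q \alpha_q^2 \|\widehat{\Xi}_{q,q-1}\eta\|^2 = \|D^*\eta\|^2 < \infty$, so $\eta \in \operatorname{dom}(D)$ and therefore $D = D^*$. (Alternatively one may invoke the standard fact that a densely defined diagonal operator with real eigenvalues relative to a complete orthogonal family of eigenspaces is self-adjoint, or check the basic criterion that $D \pm i$ is surjective.)

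Finally, for unboundedness: as long as $L^2(\Lambda^\infty, M)$ is infinite-dimensional (equivalently $\Lambda^\infty$ is an infinite set, the generic situation), the finite-dimensionality of each $\mathcal{R}_s$ forces $\mathcal{H}_q \neq 0$ for infinitely many $q$; choosing a unit vector $\xi_q \in \mathcal{H}_q$ for such $q$ gives $\|D\xi_q\| = \alpha_q$, and since $\alpha_q \to \infty$ the operator $D$ is not bounded. I do not expect a genuine obstacle here; the two points that need care are (i) the density of $\bigcup_s \mathcal{R}_s$ in $L^2(\Lambda^\infty, M)$, which is a routine consequence of the cylinder sets forming a basis, and (ii) pinning down $\operatorname{dom}(D^*)$ precisely, so as to distinguish self-adjointness from mere symmetry.
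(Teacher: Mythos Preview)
Your proposal is correct and follows essentially the same route as the paper: both recognize $D$ as a diagonal operator with real eigenvalues relative to the orthogonal decomposition $L^2(\Lambda^\infty,M)=\mathcal{R}_{-1}\oplus\bigoplus_{q\ge 0}(\mathcal{R}_q\cap\mathcal{R}_{q-1}^\perp)$, use density of $\bigcup_s \mathcal{R}_s$ to see $D$ is densely defined, and conclude self-adjointness from the diagonal form. Your version is in fact more careful than the paper's---you specify the maximal domain explicitly and verify $\operatorname{dom}(D^*)=\operatorname{dom}(D)$ rather than merely asserting it, and you correctly note that unboundedness needs infinitely many nonzero eigenspaces---but the underlying argument is the same.
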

 
 \begin{proof} 
 The fact that $D$ is unbounded follows from the hypothesis that $\lim_{q \to \infty} \alpha_q = \infty$. 
Thus, to see that $D$ is self-adjoint we must first check that it is densely defined, and then show that $D$ and  $D^*$ have the same domain.  For the first assertion, recall from Lemma 4.1 of \cite{FGKP} that 
\[ \{[\eta]: d(\eta) = (n, \ldots, n)\text{ for some }n \in \N\} \]
 generates the topology on $\Lambda^\infty$, and hence $
 \text{span} \{\chi_{[\eta]}:d(\eta)=(n,n,\dots,n),\; n\in \N\}
$
 is dense in $L^2(\Lambda^\infty, M)$.
  Given such a ``square'' cylinder set $[\eta]$ with $d(\eta) =(s, \ldots, s)$, since $\chi_{[\eta]} \in \mathcal{R}_s$, we can write $\chi_{[\eta]} = \sum_{r\leq s} \widehat{\Xi}_{r, r-1}(\chi_{[\eta]})$.  Then, 
\begin{equation*}
D (\chi_{[\eta]}) = \sum_{r\leq s} \alpha_r \widehat{\Xi}_{r,r-1} (\chi_{[\eta]}) ,
\end{equation*}
which is a finite linear combination of vectors with finite $L^2$-norm, and hence is in $L^2(\Lambda^\infty, M)$. 
 In other words, for any finite linear combination $\xi$ of characteristic functions of square cylinder sets, $D\xi$ is in $L^2(\Lambda^\infty, M)$.  Thus $D$ is defined on (at least) the finite linear combinations of square cylinder sets, which form a dense subspace of $L^2(\Lambda^\infty, M)$.
 
Moreover, our definition of $D$ as a diagonal operator on $L^2(\Lambda^\infty, M)$ with real eigenvalues implies that  $D = D^*$ formally; since the operators $D$ and $D^*$ are given by the same diagonal formula, their domains also agree, and hence we do indeed have $D = D^*$ as unbounded operators.
 \end{proof}

\begin{prop} 
Let $D$ be the operator on $L^2(\Lambda^\infty, M)$ given in \eqref{eq:Dirac}.      For all complex numbers $\lambda \not\in \{ \alpha_n \}_{n \in \N}$, the resolvent $R_\lambda(D) := (D - \lambda)^{-1}$ is a compact operator on $L^2(\Lambda^\infty, M)$.
      \label{pr:cpt-resolvent}
    \end{prop}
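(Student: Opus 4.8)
The plan is to show that $D$ has compact resolvent by verifying the standard criterion: $D$ is a diagonalizable operator with discrete spectrum $\{\alpha_q\}$, each eigenvalue $\alpha_q$ has finite multiplicity, and $\alpha_q \to \infty$. Granting this, the resolvent $R_\lambda(D)$ is a norm-limit of finite-rank operators, hence compact. So the real content is that $\dim(\mathcal R_q \cap \mathcal R_{q-1}^\perp) < \infty$ for every $q$, equivalently that $\dim \mathcal R_q < \infty$ for every $q$.

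First I would observe that $\mathcal R_q$ is spanned by $\{\chi_{[\eta]} : \sup_i d(\eta)_i \le q\}$, and the set of such $\eta$ is contained in $\bigcup_{n \in \N^k,\ n \le (q,\dots,q)} \Lambda^n$. Since $\Lambda$ is a \emph{finite} $k$-graph, each $\Lambda^n$ is a finite set, and there are only finitely many $n \in \N^k$ with $n \le (q,\dots,q)$; hence $\mathcal R_q$ is spanned by a finite set of characteristic functions and is finite-dimensional. Consequently each eigenspace $\widehat{\Xi}_{q,q-1}(L^2(\Lambda^\infty,M)) = \mathcal R_q \cap \mathcal R_{q-1}^\perp$ is finite-dimensional.

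Next I would assemble the resolvent estimate. Fix $\lambda \notin \{\alpha_n\}_{n \in \N}$. On the eigenspace of $D$ for eigenvalue $\alpha_q$, the operator $R_\lambda(D)$ acts as multiplication by $(\alpha_q - \lambda)^{-1}$; since $\mathcal R_{-1}$ (the constants) is also an eigenspace, with eigenvalue $\alpha_0$ in the one-vertex degenerate case or simply sitting inside $\mathcal R_0$ otherwise, one must take a tiny bit of care about how the constants and the lowest $\widehat\Xi_{q,q-1}$ interact, but in all cases $L^2(\Lambda^\infty,M)$ decomposes as an orthogonal direct sum of the finite-dimensional eigenspaces of $D$. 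Define $R_N$ to be the finite-rank operator obtained by truncating $R_\lambda(D)$ to the (finite-dimensional) span of the eigenspaces with eigenvalue $\alpha_q$ for $q \le N$. Then $\|R_\lambda(D) - R_N\| \le \sup_{q > N} |\alpha_q - \lambda|^{-1}$, which tends to $0$ as $N \to \infty$ because $\alpha_q \to \infty$. A norm-limit of finite-rank operators is compact, so $R_\lambda(D)$ is compact.

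The only genuine obstacle is the bookkeeping in the previous paragraph: one must be sure that $D - \lambda$ is actually invertible (this is exactly the hypothesis $\lambda \notin \{\alpha_n\}$ together with $\inf_q |\alpha_q - \lambda| > 0$, which follows from $\alpha_q \to \infty$ and $\lambda$ avoiding the discrete set $\{\alpha_n\}$), and that the domain issues from Proposition~\ref{pr:dirac-self-adjoint} do not interfere — but since $D$ is self-adjoint with the stated spectrum, $R_\lambda(D)$ is a bounded everywhere-defined operator and the spectral decomposition justifies the truncation argument directly. I would phrase the whole argument through the orthogonal eigenspace decomposition of $D$ to keep this clean.
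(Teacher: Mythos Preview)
Your proposal is correct and follows essentially the same route as the paper: both arguments use the diagonal form of $D$ to write $R_\lambda(D)$ as a sum of scalar multiples of the projections $\widehat{\Xi}_{q,q-1}$, truncate to a finite-rank operator, and use $\alpha_q \to \infty$ to control the tail. If anything, your version is slightly more careful in explicitly verifying that each $\mathcal{R}_q$ is finite-dimensional (a point the paper uses implicitly when it asserts the truncations are finite rank) and in checking that $\inf_q |\alpha_q - \lambda| > 0$.
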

\begin{proof}
By definition, $D$ is given by multiplication by $\alpha_q$ on $\mathcal{R}_{q}\cap \mathcal{R}_{(q-1)}^\perp$.  Consequently, for all $q 
\in \N$, $(D- \lambda)^{-1}$ is given by multiplication by $\frac{1}{\alpha_q - \lambda}$ on $\mathcal{R}_{q}\cap \mathcal{R}_{(q-1)}^\perp$.

Since $\lambda \not \in \{\alpha_n\}_{n\in \N}$ and $\lim_{n\to \infty} \alpha_n=\infty$, given $\epsilon > 0 ,$ we can choose $N$ so that for all $n \geq N$, $\frac{1}{|\alpha_n -\lambda |} < \epsilon$.  Fix $s\in \N$; then for any $f \in \mathcal{R}_s \cap \mathcal{R}_{s-1}^\perp$ of norm 1, 
\begin{align*}
\| \left( \sum_{q=1}^N \frac{1}{\alpha_q - \lambda}\widehat \Xi_{q, q-1} (f) \right)& - (D-\lambda)^{-1}(f)  \|  
= \| \sum_{q > N} \frac{1}{\alpha_q - \lambda} \widehat{\Xi}_{q, q-1}(f) \|  \\
&= \begin{cases}
\left| \frac{1}{\alpha_s - \lambda} \right| \, \|f\| & \text{ if } s > N \\
0 & \text{ if } s \leq N
\end{cases} \\
& < \epsilon,
\end{align*}
since $\|f \| = 1$ by hypothesis.  Since the subspaces $\{  \mathcal{R}_s \cap \mathcal{R}_{s-1}^\perp: s \in \N_0\}$ span $L^2(\Lambda^\infty, M)$,
it follows that $(D - \lambda)^{-1}$ is the norm limit of finite rank operators and hence is compact. 
\end{proof}

 \begin{thm} \label{thm-Consani-Marcolli-spectral-triples-k-graphs} 
 Let $\Lambda$ be a finite, strongly connected $k$-graph, and denote by $\pi$ the representation of $C^*(\Lambda)$ on $L^2(\Lambda^\infty, M)$ given in Equations \eqref{eq:S_lambda} and \eqref{eq:S-lambda-star}. Let $\mathcal{A}_\Lambda$ be the dense $\ast$-subalgebra of $C^*(\Lambda)$ given in Definition~\ref{def:dense-subalg} and let $D$ be the operator given in \eqref{eq:Dirac}.
 If {there exists a constant $C\ge 0$ such that} the sequence $\alpha= \{\alpha_q\}_{q \in \N}$ satisfies 
 \[
| \alpha_{q+1} - \alpha_q | \leq C,\ \forall q \in \N, 
 \] 
 then the commutator $[D,\pi(a)]$ is a bounded operator on $L^2(\Lambda^\infty, M)$ for any $a \in \mathcal{A}_\Lambda$.
 
 Combined with the above results, this implies that the data $(\mathcal{A}_\Lambda,  L^2(\Lambda^\infty, M), D)$ gives a spectral triple for $C^*(\Lambda)$. 
 \end{thm}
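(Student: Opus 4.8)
The plan is to reduce the claim to the algebra generators, to show that the corresponding representation operators move the eigenspace decomposition of $D$ by only a bounded amount, and then to extract boundedness of the commutator from the hypothesis $|\alpha_{q+1}-\alpha_q|\le C$.

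First, since $\mathcal A_\Lambda=\operatorname{span}\{s_\lambda s_\eta^*\}$ and $a\mapsto[D,\pi(a)]$ is linear, it suffices to bound $[D,T]$ for $T:=S_\lambda S_\eta^*$, where $S_\mu:=\pi(s_\mu)$ and $\lambda,\eta\in\Lambda$ are arbitrary. Write $H_q:=\mathcal R_q\cap\mathcal R_{q-1}^\perp$, the eigenspace of $D$ for the eigenvalue $\alpha_q$ (with $H_{-1}:=\mathcal R_{-1}$, on which $D$ is zero); then $\mathcal R_s=\bigoplus_{q\le s}H_q$, and, since the square cylinder functions span a dense subspace (Lemma 4.1 of \cite{FGKP}), $L^2(\Lambda^\infty,M)$ is the closure of the algebraic direct sum $\bigoplus_q H_q$. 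The key structural point is that $T$ respects the filtration $\{\mathcal R_s\}$ up to a bounded shift. Indeed, \eqref{eq:S_lambda} shows that $S_\lambda$ sends $\chi_{[\zeta]}$ to a scalar multiple of $\chi_{[\lambda\zeta]}$, and $d(\lambda\zeta)=d(\lambda)+d(\zeta)$, so $S_\lambda(\mathcal R_s)\subseteq\mathcal R_{s+N}$ with $N:=\max_i d(\lambda)_i$; meanwhile \eqref{eq:S-lambda-star} shows that $S_\eta^*$ sends $\chi_{[\zeta]}$ to a linear combination of functions $\chi_{[\alpha]}$ with $d(\alpha)=\bigl(d(\eta)\vee d(\zeta)\bigr)-d(\eta)$, so that $\max_i d(\alpha)_i\le\max_i d(\zeta)_i$ and hence $S_\eta^*(\mathcal R_s)\subseteq\mathcal R_s$. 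Therefore $T(\mathcal R_s)\subseteq\mathcal R_{s+N}$, and applying the same reasoning to $T^*=S_\eta S_\lambda^*$ gives $T^*(\mathcal R_s)\subseteq\mathcal R_{s+M}$ with $M:=\max_i d(\eta)_i$. These two inclusions together imply that, for a constant $b$ depending only on $\lambda$ and $\eta$, $T$ maps each eigenspace $H_q$ into the finite band $\bigoplus_{q-b\le p\le q+b}H_p$.

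With this band structure in hand, I would decompose $T=\sum_{|j|\le b}T_j$ into its grading-shifting pieces, where $T_j$ maps $H_q$ into $H_{q+j}$ for every $q$; each $T_j$ is bounded with $\|T_j\|\le\|T\|$ by the standard orthogonality estimate for diagonal compressions of a bounded operator. Since $D$ acts on $H_q$ as the scalar $\alpha_q$, a one-line computation gives $[D,T_j]v=(\alpha_{q+j}-\alpha_q)\,T_j v$ for $v\in H_q$; that is, $[D,T_j]$ is $T_j$ followed by multiplication by the sequence of coefficients $\alpha_{q+j}-\alpha_q$. Here the hypothesis enters: $|\alpha_{q+1}-\alpha_q|\le C$ forces $|\alpha_{q+j}-\alpha_q|\le|j|\,C\le bC$ uniformly in $q$ (the finitely many boundary cases at the bottom of the filtration, where $q+j$ drops below $0$, cause no difficulty, since $\alpha$ is bounded on any finite range and $D$ vanishes on the constants). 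Hence $\|[D,T_j]\|\le bC\|T\|$, and $[D,T]=\sum_{|j|\le b}[D,T_j]$ is bounded, as claimed.

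To conclude that the data form a spectral triple, I would assemble the pieces: $\pi$ is a $\ast$-representation of $C^*(\Lambda)$ and $\mathcal A_\Lambda$ is a dense $\ast$-subalgebra; $D$ is self-adjoint with compact resolvent by Propositions \ref{pr:dirac-self-adjoint} and \ref{pr:cpt-resolvent}; $\pi(a)$ carries the core of finite linear combinations of square cylinder functions used in the proof of Proposition \ref{pr:dirac-self-adjoint} into $\bigcup_s\mathcal R_s\subseteq\operatorname{dom}(D)$; and $[D,\pi(a)]$ is bounded for $a\in\mathcal A_\Lambda$ by the argument above. The hard part of all this is the band estimate in the second paragraph, and within it the precise observation --- to be read off from \eqref{eq:S-lambda-star} --- that $S_\eta^*$ does not raise the filtration level at all, so that $T$ genuinely shifts the $D$-grading within a fixed band; everything downstream is routine, and the role of the Lipschitz condition $|\alpha_{q+1}-\alpha_q|\le C$ is exactly to keep the eigenvalue gaps $\alpha_{q+j}-\alpha_q$ across that band uniformly bounded (it is easy to produce faster-growing $\alpha$ for which $[D,\pi(a)]$ becomes unbounded).
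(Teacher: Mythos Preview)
Your argument is correct and follows the same overall strategy as the paper: reduce to generators $T=S_\lambda S_\eta^*$, show that $T$ is banded with respect to the spectral decomposition $\bigoplus_q H_q$ of $D$, and then use $|\alpha_{q+1}-\alpha_q|\le C$ to bound eigenvalue gaps across the band.

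The execution, however, differs in a way worth noting. The paper tracks the filtration shifts of $S_\lambda$ and $S_\lambda^*$ separately and precisely (e.g.\ $S_\lambda^*$ sends $\mathcal R_s$ into $\mathcal R_{s-\min_\lambda}$ and $\mathcal R_s^\perp$ into $\mathcal R_{s-\max_\lambda}^\perp$), and then performs a direct case analysis on $q$ relative to $\max_\mu$ and $\min_\mu$, bounding $\|[D,S_\lambda S_\mu^*]f\|$ by an explicit sum $\sum_w|\alpha_w-\alpha_q|$. You instead use the cruder estimate $S_\eta^*(\mathcal R_s)\subseteq\mathcal R_s$ together with the dual observation $T^*(\mathcal R_s)\subseteq\mathcal R_{s+M}$, which by orthogonality gives the lower band edge without any case split; then you decompose $T=\sum_{|j|\le b}T_j$ into its off-diagonal pieces and invoke the standard inequality $\|T_j\|\le\|T\|$ (valid because $q\mapsto q+j$ is injective, so the ranges $H_{q+j}$ are mutually orthogonal). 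This packaging is genuinely cleaner: it avoids the three-case analysis of the paper and makes the role of the Lipschitz hypothesis on $\alpha$ transparent. The price is that your band width $b=\max(\max_i d(\lambda)_i,\max_i d(\eta)_i)$ is slightly looser than the paper's, but this is irrelevant for boundedness. Your handling of the finitely many small-$q$ boundary terms (where $q+j$ may land at $-1$ and $D$ acts as $0$) is brief but adequate.
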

\begin{proof}
To prove that $(\mathcal{A}_\Lambda, L^2(\Lambda^\infty, M), D)$ is a spectral triple we need to show that $D$ is self-adjoint, $(D^2+1)^{-1}$ is compact and $[D,\pi(a)]$ is bounded for all $a \in \mathcal{A}_\Lambda$. The first statement is the content of Proposition \ref{pr:dirac-self-adjoint}, and the second follows from Proposition \ref{pr:cpt-resolvent}, thanks to the fact that $\pm i \not \in \{ \alpha_n\}_{n\in \N}$ and hence $(D \pm i )^{-1}$ is compact. 
Thus, to complete the proof of the Theorem, we will now show that $[D, \pi(a)]$ is bounded for all finite linear combinations $a = \sum_{i \in F } c_i s_{\lambda_i} s_{\eta_i}^*\in \mathcal{A}_\Lambda$, {where $c_i\in \C$.}

Given $\lambda \in \Lambda$, write $\max_\lambda = \max_j \{ d(\lambda)_j\}$   and $\min_\lambda = \min_j \{d(\lambda)_j\}$. Then the formula \eqref{eq:S_lambda} implies immediately that, for any fixed $s\in \N$,
 the operator  $S_\lambda$ on $L^2(\Lambda^\infty, M)$ takes $\mathcal{R}_s$ to $\mathcal{R}_{s+\max_\lambda}$.
      
Moreover, Equation \eqref{eq:S-lambda-star} implies that the operator $S_\lambda^*$ on $L^2(\Lambda^\infty, M)$ takes $\mathcal{R}_s$ to $\mathcal{R}_{s-\min_\lambda}$ if $\min_\lambda \leq s$, and to $\mathcal{R}_0$ otherwise.
To see this, suppose $\chi_{[\eta]} \in \mathcal{R}_s$ and $d(\eta) = (n_1, \ldots, n_k)$. 
Then $S_\lambda^* \chi_{[\eta]}$ is a linear combination of cylinder sets $\chi_{[\zeta]}$ with 
\[d(\zeta)_i = \begin{cases} 0, & d(\lambda)_i  \geq d(\eta)_i \\
d(\eta)_i - d(\lambda)_i, & d(\lambda)_i < d(\eta)_i
\end{cases}\]
Consequently, we see that (as desired) 
     \begin{align*}
     \max\{ d(\zeta)_i \} & = \max\{0,  n_i - d(\lambda)_i: 1 \leq i \leq k \}  \leq s - \textstyle{\min_\lambda}. \end{align*}
If $s<\min_\lambda$, then $n_i - d(\lambda)_i \leq 0$ for all $i$, so  $S^*_\lambda \chi_{[\eta]}\in \mathcal{R}_0$ for all $\chi_{[\eta]}\in \mathcal{R}_s$.
      
  Similarly,  if $f \in \mathcal{R}_s^\perp$, then $S_\lambda f \in \mathcal{R}_{s + \min_\lambda}^\perp$.  
   Namely, if  $\langle f, h \rangle = 0$ for all $h \in \mathcal{R}_s$, then our description of $S_\lambda^*$ above yields
      \[ \langle f, S_\lambda^* g\rangle = 0 \ \forall \ g \in \mathcal{R}_{s + \min_\lambda}.\]
An analogous argument shows that  $S_\lambda^*$  takes $\mathcal{R}_s^\perp$ to $\mathcal{R}_{s-\max_\lambda}^\perp$ if $s \geq \max_\lambda$. 
 
Now fix $q \in \N, \ f\in \mathcal{R}_q \cap \mathcal{R}_{q-1}^{\perp}$,  and fix $\lambda, \mu \in \Lambda$ with $s(\lambda) = s(\mu)$. 
We use  the reasoning of the previous paragraphs to identify the subspaces $\mathcal{R}_s, \mathcal{R}_t^\perp$ which contain  $S_\lambda S_\mu^* f$.

  If $ \max_\mu \geq q$, then we cannot guarantee that $S_\mu^* f$ is orthogonal to any $\mathcal{R}_t$ with $t \geq 0$; in order to do so, we must have $\langle S_\mu^* f, \xi \rangle = \langle f, S_\mu \xi \rangle = 0$ for all $\xi \in \mathcal R_t$.  In other words, we must have $S_\mu \xi \in \mathcal R_{q-1}$ for all $\xi \in \mathcal R_t$.  However, $S_\mu $ takes $ R_t $ into $ \mathcal R_{t + \max_\mu} \supsetneqq \mathcal R_{q-1}$ if $\max_\mu \geq q$ and   $t \geq 0$.
  
Moreover, if $q < \min_\mu$, then $S_\mu^* f\in \mathcal{R}_0$.  Thus,  
  \[ q < \textstyle{\min_\mu} \Rightarrow S_\lambda S_\mu^* f \in \mathcal{R}_{\max_\lambda}; \quad \textstyle{\min_\mu} \leq q \leq  \max_\mu \Rightarrow S_\lambda S_\mu^* f \in\mathcal{R}_{q + \max_\lambda - \min_\mu}; \]
  \[ q > \textstyle{\max_\mu} \Rightarrow S_\lambda S_\mu^* f \in \mathcal{R}_{q + \max_\lambda - \min_\mu} \cap \mathcal{R}_{(q-1) + \min_\lambda - \max_\mu}^\perp.\]

For now, assume $q > \max_\mu$.  Writing $g= S_\lambda S_\mu^*  f$,
  we have 
 \begin{align*} 
 g & = \Big( \Xi_{q+\max_\lambda - \min_\mu }-\Xi_{(q-1)+\min_\lambda - \max_\mu} \Big) g =\sum_{w=q+\min_\lambda - \max_\mu }^{q+\max_\lambda - \min_\mu }  \Big( \Xi_{w}-\Xi_{w-1} \Big) g
\end{align*}
and consequently 
  $$ D(S_\lambda S_\mu^* f) =: D g = \sum_{w=q+\min_\lambda - \max_\mu }^{q+\max_\lambda - \min_\mu } D \Big( \Big( \Xi_{w}-\Xi_{w-1} \Big) g \Big) = \sum_{w=q+\min_\lambda-\max_\mu}^{q+\max_\lambda-\min_\mu} \, \alpha_w\,  \Big( \Big( \Xi_{w}-\Xi_{w-1} \Big) g \Big). $$ 
 It now follows that, if $f \in \mathcal R_q \cap \mathcal R_{q-1}^\perp$ for $q > \max_\mu$,
    \[
   [D, S_\lambda S_\mu^* ] f= DS_\lambda S_\mu^* f - S_\lambda S_\mu^* D f =  \sum_{w=q+\min_\lambda - \max_\mu }^{q+\max_\lambda - \min_\mu }  \, ( \alpha_w - \alpha_q) \Big( \Big( \Xi_{w}-\Xi_{w-1} \Big) S_\lambda S_\mu^* f \Big)  .
    \]
 Consequently,  since $|\alpha_w - \alpha_{w-1}| \leq C$ for all $w$, 
    \[
    \begin{split}
      \Vert  [D, S_\lambda S_\mu^* ] f \Vert &\leq   \sum_{w=q+\min_\lambda - \max_\mu }^{q+\max_\lambda - \min_\mu }  \, |\alpha_w-\alpha_q|\,  \Vert  S_\lambda S_\mu^* f \Vert  \\ 
        &\leq \|S_\lambda S_\mu^* f\|  \sum_{w= q+ \min_\lambda - \max_\mu}^{q + \max_\lambda - \min_\mu} C |w-q| = \|S_\lambda S_\mu^*  f\|  C \sum_{t = \min_\lambda - \max_\mu }^{\max_\lambda - \min_\mu } | t|  .
     \end{split}
        \]
  Since $S_\lambda S_\mu^*$ is a partial isometry and hence norm-preserving,  
  whenever $f \in \mathcal R_q \cap \mathcal R_{q-1}^\perp$ for $q > \max_\mu$,  $\| [D, S_\lambda S_\mu^*] f\|$ is bounded above by a constant which depends only on $\lambda$ and $\mu$.

  If we have $\min_\mu \leq q \leq \max_\mu$, since we no longer know that $S_\lambda S_\mu^* f \in \mathcal{R}_t^\perp$ for any $t$, in calculating $\| [D, S_\lambda S_\mu^* f\ \|$ we have to begin our summation over $w$ at zero, rather than at $q +  \min_\lambda - \max_\mu$. 
  In this case, the final (in)equality above becomes 
  \[ \| [D, S_\lambda S_\mu^*] f \| \leq \sum_{t = 1}^{\max_\lambda - \min_\mu} C t \|S_\lambda S_\mu^* f\|  + \sum_{t=1}^q  C t \| S_\lambda S_\mu^* f \|.\]
 In this case, $q \leq \max_\mu$, so we obtain the norm bound 
  \begin{align*} \| [D, S_\lambda S_\mu^*] f \| & \leq \| S_\lambda S_\mu^* f\| C \left( \frac{(\max_\lambda - \min_\mu)(\max_\lambda - \min_\mu +1)}{2} + \frac{\max_\mu (\max_\mu +1)}{2} \right) .
  \end{align*}
  In other words, $\|[D, S_\lambda S_\mu^*] f\|$ is again bounded by a constant which only depends on $\lambda$ and $\mu$.  A similar argument shows that if $q < \min_\mu$, $\Vert [D,S_\lambda S^*_\mu] f\Vert$ is bounded by a constant which only depends on $\lambda$ and $\mu$.  Since $\{\mathcal R_q \cap \mathcal R_{q-1} \}_{q\in \N}$ densely spans $L^2(\Lambda^\infty, M)$, it follows that $[D, S_\lambda S_\mu^* ]$ is a bounded operator for all $(\lambda, \mu) \in \Lambda \times \Lambda$ with $s(\lambda) = s(\mu)$.
   
By linearity, it follows  that $[D, \pi(a)]$ is bounded for all finite linear combinations $a = \sum_{i \in F} c_i s_{\lambda_i} s_{\eta_i}^*$ of the generators $s_\lambda s_\eta^*$ of $C^*(\Lambda)$. Since every element of the dense $*$-subalgebra $\mathcal{A}_\Lambda$ of $C^*(\Lambda)$ is given by such a finite linear combination, it follows that $(\mathcal{A}_\Lambda, L^2(\Lambda^\infty, M), D)$ is a spectral triple, as claimed.
 \end{proof}

 \begin{thm}
 \label{thm:CM-Dirac-wavelets}
 Let $(\mathcal{A}_\Lambda, L^2(\Lambda^\infty, M), D)$ be the spectral triple described in Theorem~\ref{thm-Consani-Marcolli-spectral-triples-k-graphs}.  
  The eigenspaces of the Dirac operator $D$ given in \eqref{eq:Dirac} agree with the wavelet decomposition 
 \[L^2(\Lambda^\infty, M) = \mathscr{V}_0 \oplus \bigoplus_{q=0}^\infty \mathcal{W}_q\]
 of Theorem \ref{thm:wavelets} above (also see \cite[Theorem 4.2]{FGKP}). In particular, 
 \[ \mathscr{V}_0 = \mathcal{R}_0 \supseteq \mathcal{R}_{-1}\;\; \text{  and  }\;\;
  \mathcal{W}_q = \mathcal{R}_{q+1}  \cap \mathcal{R}_{q}^\perp,\ q\geq 0.\]

 \end{thm}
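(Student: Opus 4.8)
The plan is to treat the two asserted identities separately. The identity $\mathscr{V}_0 = \mathcal{R}_0 \supseteq \mathcal{R}_{-1}$ is immediate from the definitions: $\mathcal{R}_0$ is spanned by those $\chi_{[\eta]}$ with $\sup_i d(\eta)_i \le 0$, i.e.\ with $d(\eta) = 0$, and such $\eta$ are precisely the vertices, so $\mathcal{R}_0 = \text{span}\{\chi_{[v]} : v \in \Lambda^0\} = \mathscr{V}_0$; moreover $\Lambda^\infty = \bigsqcup_{v \in \Lambda^0} [v]$, so the constant function $\mathbf 1 = \sum_{v\in\Lambda^0}\chi_{[v]}$ lies in $\mathcal{R}_0$, giving $\mathcal{R}_{-1} = \C\mathbf 1 \subseteq \mathcal{R}_0$.

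For the second identity I would first prove the inclusion $\mathcal{W}_q \subseteq \mathcal{R}_{q+1}\cap\mathcal{R}_q^\perp$ for every $q \ge 0$, and then upgrade it to an equality. For the inclusion: each mother function $f^{i,v}$ is a linear combination of $\chi_{[\lambda_0]}$ and $\chi_{[\lambda_i]}$ with $\lambda_0,\lambda_i \in v\Lambda^{(1,\dots,1)}$, hence lies in $\mathcal{R}_1$; and $\langle f^{i,v},\chi_{[w]}\rangle = 0$ for all vertices $w$ (as noted just after \eqref{eq:f_iv}), so $f^{i,v}\in\mathcal{R}_1\cap\mathcal{R}_0^\perp$. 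When $d(\lambda) = (q,\dots,q)$ one has $\max_\lambda = \min_\lambda = q$, so, recalling from the proof of Theorem~\ref{thm-Consani-Marcolli-spectral-triples-k-graphs} that $S_\lambda$ carries $\mathcal{R}_s$ into $\mathcal{R}_{s+\max_\lambda}$ and $\mathcal{R}_s^\perp$ into $\mathcal{R}_{s+\min_\lambda}^\perp$, we get $S_\lambda(\mathcal{R}_1)\subseteq\mathcal{R}_{q+1}$ and $S_\lambda(\mathcal{R}_0^\perp)\subseteq\mathcal{R}_q^\perp$; thus $S_\lambda f^{i,s(\lambda)}\in\mathcal{R}_{q+1}\cap\mathcal{R}_q^\perp$, and taking the span over all such $\lambda,i$ yields $\mathcal{W}_q\subseteq\mathcal{R}_{q+1}\cap\mathcal{R}_q^\perp$.

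To conclude, note that $\mathcal{R}_0\subseteq\mathcal{R}_1\subseteq\cdots$ is an increasing chain of finite-dimensional subspaces with dense union (as recalled in the proof of Proposition~\ref{pr:dirac-self-adjoint}), so $L^2(\Lambda^\infty,M) = \mathcal{R}_0 \oplus \bigoplus_{q\ge 0}\big(\mathcal{R}_{q+1}\cap\mathcal{R}_q^\perp\big)$ orthogonally; on the other hand Theorem~\ref{thm:wavelets} together with the first identity gives $L^2(\Lambda^\infty,M) = \mathcal{R}_0 \oplus \bigoplus_{q\ge 0}\mathcal{W}_q$. Cancelling $\mathcal{R}_0$ and letting $P_q, Q_q$ denote the orthogonal projections onto $\mathcal{R}_{q+1}\cap\mathcal{R}_q^\perp$ and onto $\mathcal{W}_q$, the inclusion just proved gives $Q_q \le P_q$, while the two decompositions give $\sum_q P_q = \sum_q Q_q$ strongly (both equal to the projection onto $\mathcal{R}_0^\perp$); hence $\sum_q(P_q - Q_q) = 0$ with every summand positive, forcing $P_q = Q_q$, i.e.\ $\mathcal{W}_q = \mathcal{R}_{q+1}\cap\mathcal{R}_q^\perp$ for all $q\ge 0$. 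Equivalently, one may expand any $v\in\mathcal{R}_{q+1}\cap\mathcal{R}_q^\perp$ as $\sum_j w_j$ with $w_j\in\mathcal{W}_j\subseteq\mathcal{R}_{j+1}\cap\mathcal{R}_j^\perp$; the nesting $\mathcal{R}_{j+1}\subseteq\mathcal{R}_q$ for $j<q$ and $\mathcal{R}_{q+1}\subseteq\mathcal{R}_j$ for $j>q$ forces $\langle v,w_j\rangle = 0$ for $j\ne q$, whence $v = w_q\in\mathcal{W}_q$. The only delicate point is this last step, promoting a single inclusion at each level to an equality; the geometric input, namely the behaviour of the operators $S_\lambda$ on the filtration $\{\mathcal{R}_s\}$, was already carried out in Theorem~\ref{thm-Consani-Marcolli-spectral-triples-k-graphs}, so I do not expect a genuine obstacle.
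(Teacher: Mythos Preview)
Your argument is correct and follows the paper's proof closely up through the inclusion $\mathcal{W}_q\subseteq\mathcal{R}_{q+1}\cap\mathcal{R}_q^\perp$. The only genuine difference is in how you upgrade this inclusion to an equality. The paper does it by an explicit dimension count: it observes that $\{\chi_{[\lambda]}:d(\lambda)=(s,\ldots,s)\}$ is an orthogonal basis for $\mathcal{R}_s$, so $\dim(\mathcal{R}_{q+1}\cap\mathcal{R}_q^\perp)=\#(\Lambda^{(q+1,\ldots,q+1)})-\#(\Lambda^{(q,\ldots,q)})$, and then computes $\dim\mathcal{W}_q$ directly from the basis in Theorem~\ref{thm:wavelets} and the factorization property to get the same number. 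You instead invoke the two global orthogonal decompositions of $L^2(\Lambda^\infty,M)$ and conclude by a soft projection argument (or equivalently the nesting argument you sketch at the end). Your route is slightly cleaner and avoids any combinatorics, while the paper's route has the minor virtue of producing the explicit dimensions along the way; both are short and neither involves any difficulty beyond what has already been established.
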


 \begin{proof}
 By definition, $\mathcal R_{-1} \subseteq \mathcal{R}_0 =  \mathscr{V}_0 = \text{span}\{ \chi_{[v]}: v \in \Lambda^0\}$. 
 For the second assertion, 
recall that $\mathcal W_n = \text{span}\{ S_\lambda f: f \in \mathcal{W}_0, \ d(\lambda) = (q, q, \ldots, q)\}$.  Since $\max_\lambda = \min_\lambda = q$ for all such $\lambda$, each such $S_\lambda$ takes $\mathcal{R}_s \cap \mathcal{R}_{s-1}^\perp$ to $\mathcal{R}_{s+q} \cap \mathcal{R}_{s+q-1}^\perp$.   Thus, it suffices to see that $\mathcal W_0 \subseteq \mathcal R_1 \cap \mathcal R_0^\perp$, and that $\mathcal W_q$ and $\mathcal R_q \cap \mathcal R_{q-1}^\perp$ have the same dimension for all $q \in \N$.

For the first statement, recall that $\mathcal{W}_0$ was constructed precisely to be the span of a family $\{f^{i,v}\}$ of functions (see Equation \eqref{eq:f_iv}) which were orthogonal to $\mathscr{V}_0 = \mathcal{R}_0$.  Moreover, every function $f^{i,v}$ is a linear combination of characteristic functions $\chi_\eta$ with $d(\eta) = (1, \ldots, 1)$, and therefore lies in $\mathcal R_1 \cap \mathcal R_0^\perp$. 

 From the fact that $\{ S_\lambda f^{i, s(\lambda)}:  d(\lambda) = (q, q, \ldots, q), \ 1 \leq i \leq \#(D_{s(\lambda)}) -1\}$ is a basis for $\mathcal W_q$, and the factorization rule in $\Lambda$, it follows that   $\mathcal{W}_q$ has dimension 
 \[
 {\sum_{v \in \Lambda^0} \#( \Lambda^{(q,\ldots, q)}v) \cdot \left( \#( v \Lambda^{(1, \ldots, 1)})-1 \right)  = \#(\Lambda^{(q+1, \ldots, q+1)}) - \#(\Lambda^{(q, \ldots, q)}). }
 \]
 Moreover, we know from \cite[Lemma 4.1]{FGKP} that ``square'' cylinder sets generate the topology on $\Lambda^\infty$; it follows that $\mathcal{R}_s$ is  spanned by $\{ \chi_{[\lambda]}: d(\lambda) = (s, \ldots, s) \}$. Indeed, this set forms a basis for $\mathcal R_s$: if $d(\lambda) = d(\mu) = (s, \ldots, s)$, then the factorization rule implies that
 \[ \langle \chi_{[\lambda]}, \chi_{[\mu]} \rangle = \int_{\Lambda^\infty} \chi_{[\lambda]} \chi_{[\mu]} \, dM = \delta_{\lambda, \mu} M([\lambda]).\]
  Consequently, $\mathcal R_{q+1} \cap \mathcal R_q^\perp$ also has dimension $\#( \Lambda^{(q+1, \ldots, q+1)} ) - \#(\Lambda^{(q,\ldots, q)}).$  Hence, $\mathcal W_q = \mathcal R_{q+1} \cap \mathcal R_q^\perp$ for all $q \in \N$, as desired.
 \end{proof}
 
 \begin{rmk} \label{rmk:CM-rectangle-wavelets}
 Fix $J \in \N^k$ with $J_i > 0 $ for all $i$.  We described in Section 5 of \cite{FGKP2} how to construct wavelets with ``fundamental domain'' $J$ -- the original construction in Section 4 of \cite{FGKP} used $J=(1, \ldots, 1)$.  By defining 
 \[ \widetilde{\mathcal{R}}_s = \text{span} \{ \chi_{[\eta]}: d(\eta) \leq sJ\}\]
 we can construct a Dirac operator $\widetilde{D}$ on $L^2(\Lambda^\infty, M)$ which gives rise to a spectral triple $(\mathcal{A}_\Lambda, L^2(\Lambda^\infty, M), \widetilde D)$ whose eigenspaces agree with the wavelet decomposition given in Theorem 5.2 of \cite{FGKP2}.  We omit the details here as they are completely analogous to the proofs of Theorems \ref{thm-Consani-Marcolli-spectral-triples-k-graphs} and \ref{thm:CM-Dirac-wavelets} above.
 \end{rmk}

\bibliographystyle{amsplain}
\bibliography{eagbib}

\end{document}